\pdfoutput=1
\RequirePackage{ifpdf}
\ifpdf 
\documentclass[pdftex]{sigma}
\else
\documentclass{sigma}
\fi

\usepackage{braket}
\usepackage[italicdiff]{physics}

\newcommand{\opcl}[2]{(#1, #2]}

\numberwithin{equation}{section}

\newtheorem{Theorem}{Theorem}[section]
\newtheorem{Corollary}[Theorem]{Corollary}
\newtheorem{Proposition}[Theorem]{Proposition}
 { \theoremstyle{definition}
\newtheorem{Remark}[Theorem]{Remark} }

\begin{document}

\allowdisplaybreaks

\newcommand{\arXivNumber}{1808.02209}

\renewcommand{\PaperNumber}{005}

\FirstPageHeading

\ShortArticleName{A Constraint on Chern Classes of Strictly Pseudoconvex CR Manifolds}

\ArticleName{A Constraint on Chern Classes\\ of Strictly Pseudoconvex CR Manifolds}

\Author{Yuya TAKEUCHI}

\AuthorNameForHeading{Y.~Takeuchi}

\Address{Department of Mathematics, Graduate School of Science, Osaka University,\\ 1-1 Machikaneyama-cho, Toyonaka, Osaka 560-0043, Japan}
\Email{\href{mailto:yu-takeuchi@cr.math.sci.osaka-u.ac.jp}{yu-takeuchi@cr.math.sci.osaka-u.ac.jp}, \href{mailto:yuya.takeuchi.math@gmail.com}{yuya.takeuchi.math@gmail.com}}
\URLaddress{\url{https://sites.google.com/view/yuya-takeuchi-english/home}}

\ArticleDates{Received July 25, 2019, in final form January 18, 2020; Published online January 21, 2020}

\Abstract{This short paper gives a constraint on Chern classes of closed strictly pseudoconvex CR manifolds (or equivalently, closed holomorphically fillable contact manifolds) of dimension at least five. We also see that our result is ``optimal'' through some examples.}

\Keywords{strictly pseudoconvex CR manifold; holomorphically fillable; Chern class}

\Classification{32V15; 32T15; 32V05; 57R17}

\section{Introduction} \label{section:introduction}

Let $\big(M, T^{1, 0} M\big)$ be a closed connected strictly pseudoconvex CR manifold of dimension $2 n + 1\allowbreak \geq 5$. It is known that $M$ can be realized as the boundary of a strictly pseudoconvex domain in a complex manifold~\cite{Boutet-de-Monvel75,Harvey-Lawson75,Lempert95}. This fact gives some restrictions to its topology. For example, Bungart~\cite{Bungart92} has proved that the cup product
\begin{gather*}
H^{i_{1}}(M, \mathbb{C}) \otimes \dots \otimes H^{i_{m}}(M, \mathbb{C}) \to H^{\abs{I}}(M, \mathbb{C})
\end{gather*}
vanishes for any multi-index $I = (i_{1}, \dots , i_{m}) \in \mathbb{N}_{+}^{m}$ satisfying $i_{l} \leq n - 1$ and $\abs{I} = i_{1} + \dots + i_{m} \geq n + 2$; see also \cite{Popescu-Pampu08}. As noted in the last paragraph of \cite{Bungart92}, this result follows also from a result of $L^{2}$ Hodge theory on strictly pseudoconvex domains by Ohsawa~\cite{Ohsawa81,Ohsawa82}.

In this paper, we will point out that Ohsawa's result gives also a constraint on Chern classes of strictly pseudoconvex CR manifolds. For a complex vector bundle $E$ and a multi-index $K = (k_{1}, \dots , k_{m}) \in \mathbb{N}_{+}^{m}$, we denote by $c_{K}(E)$ the cohomology class $c_{k_{1}}(E) \dotsm c_{k_{m}}(E)$.

\begin{Theorem} \label{thm:vanishing-of-Chern-classes}Let $\big(M,T^{1,0} M\big)$ be a closed strictly pseudoconvex CR manifold of dimension $2 n + 1 \geq 5$. Then $c_{K}\big(T^{1,0}M\big)$ vanishes in $H^{2 \abs{K}}(M, \mathbb{C})$ for any multi-index $K$ with $2 \abs{K} \geq n + 2$.
\end{Theorem}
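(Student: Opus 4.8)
The plan is to realize $M$ as a boundary, transport the Chern classes to a holomorphic filling, and then exploit that they are of pure Hodge type when feeding them into Ohsawa's $L^{2}$ Hodge theory.

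First I would use the cited fillability results to realize $M$ as the boundary of a relatively compact strictly pseudoconvex (that is, $1$-convex) domain $\Omega$ in a complex manifold $X$ of complex dimension $n+1$. Writing $\rho$ for a defining function, the $(1,0)$-form $\partial \rho$ annihilates $T^{1,0}M$ and, since $\rho$ is a strictly plurisubharmonic defining function, is nowhere zero along $M$; hence in the exact sequence $0 \to T^{1,0}M \to T^{1,0}X|_{M} \to Q \to 0$ of complex vector bundles it gives a nowhere-zero section of the dual line bundle $Q^{*}$, so $Q$ is trivial. By the Whitney formula the total Chern classes then agree, $c\big(T^{1,0}M\big) = c\big(T^{1,0}X|_{M}\big)$. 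Therefore, if $\iota\colon M \hookrightarrow \overline{\Omega}$ is the inclusion and $\tilde{c}_{K} := c_{K}\big(T^{1,0}X|_{\overline{\Omega}}\big) \in H^{2\abs{K}}\big(\overline{\Omega},\mathbb{C}\big)$, we obtain $c_{K}\big(T^{1,0}M\big) = \iota^{*}\tilde{c}_{K}$, and the whole problem becomes: show that $\iota^{*}$ kills $\tilde{c}_{K}$ as soon as $2\abs{K} \geq n+2$.

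Next I would use that $\tilde{c}_{K}$ is far from arbitrary. Choosing a Hermitian metric on $T^{1,0}X$ and applying Chern--Weil theory, $\tilde{c}_{K}$ is represented by a closed form of pure bidegree $(\abs{K},\abs{K})$, so it lies in the deep step $F^{\abs{K}}H^{2\abs{K}}\big(\overline{\Omega},\mathbb{C}\big)$ of the Hodge filtration. This is precisely the feature that $L^{2}$ Hodge theory controls: Ohsawa's results on strictly pseudoconvex domains govern the interplay between the Hodge filtration on $\overline{\Omega}$ and restriction to the boundary, and the mechanism parallels Bungart's cup-product theorem (low-degree classes on $M$ come from $\overline{\Omega}$, while restriction annihilates the relevant part of $H^{k}$ for $k \geq n+2$, so products landing there vanish). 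Here the single class $\tilde{c}_{K}$ already sits in degree $2\abs{K}\geq n+2$ and is of balanced type $(\abs{K},\abs{K})$ with $\abs{K}\geq 1$, exactly the type that Ohsawa's theory must send to zero on $M$. A guiding model is the unit circle bundle $M$ of a negative line bundle $L \to B$ over a projective manifold $B$ of dimension $n$: there $c_{K}\big(T^{1,0}M\big) = \pi^{*}c_{K}\big(T^{1,0}B\big)$, the Gysin sequence identifies $\ker \pi^{*}$ with the image of $\cup\, c_{1}(L)$, and the Hard Lefschetz theorem makes that image all of $H^{2\abs{K}}(B,\mathbb{C})$ once $2\abs{K}\geq n+2$. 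This both pins down the threshold and exhibits the Lefschetz-type surjectivity that Ohsawa's theorem must supply in general.

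I expect the main obstacle to be exactly this last step: extracting from Ohsawa's $L^{2}$ Hodge theory the precise statement that $\iota^{*}\colon H^{k}\big(\overline{\Omega},\mathbb{C}\big)\to H^{k}(M,\mathbb{C})$ annihilates the Hodge-filtered piece carrying $\tilde{c}_{K}$ as soon as $k = 2\abs{K}\geq n+2$. The delicacy is that $\Omega$ need not be Stein---it may contain a nontrivial exceptional analytic set, as in the optimality examples---so the vanishing is genuinely Hodge-theoretic rather than a consequence of the homotopy type, and the degree $2\abs{K}$ lies above the middle dimension, where Hodge symmetry is most subtle. I would therefore devote the core of the argument to fixing the correct form of Ohsawa's decomposition and to checking that the bidegree of the Chern--Weil representative is respected by the $L^{2}$ complex up to the boundary; granting this, the vanishing of $c_{K}\big(T^{1,0}M\big)$ is immediate.
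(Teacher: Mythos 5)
Your first two steps coincide with the paper's: realize $M$ as the boundary of a strictly pseudoconvex domain $\Omega$ in an $(n+1)$-dimensional complex manifold (the paper invokes Lempert to place $\Omega$ inside a \emph{projective} manifold $X$), note that $T^{1,0}X|_{M}$ splits as $T^{1,0}M$ plus a trivial line bundle, and conclude that $c_{K}\big(T^{1,0}M\big)$ is the restriction of a class on $\Omega$. The genuine gap is the final step, which you yourself flag as the ``main obstacle'' and leave unproved; worse, the route you sketch for it --- tracking the $(\abs{K},\abs{K})$ Hodge bidegree of the Chern--Weil representative and hoping that Ohsawa's theory annihilates filtered pieces of balanced type --- misidentifies both the statement to be used and the mechanism. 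The relevant input from \cite[Corollary~4]{Ohsawa82} (reproved in \cite{Bungart92}) is that the natural map $H^{i}_{c}(\Omega, \mathbb{C}) \to H^{i}(\Omega, \mathbb{C})$ is \emph{surjective} for $i \geq n + 2$. Feeding this into the exact sequence of the pair,
\begin{gather*}
H^{i}_{c}(\Omega, \mathbb{C}) \to H^{i}(\Omega, \mathbb{C}) \to H^{i}(M, \mathbb{C}),
\end{gather*}
in which the composite vanishes, one sees that the restriction $H^{i}(\Omega, \mathbb{C}) \to H^{i}(M, \mathbb{C})$ is \emph{identically zero} for every $i \geq n + 2$. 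No information about the Hodge type of the Chern class is needed: every degree-$i$ class on $\Omega$ dies on the boundary once $i \geq n+2$, so your concern that ``the vanishing is genuinely Hodge-theoretic rather than a consequence of the homotopy type'' steers you away from the actual, purely cohomological argument. Two further loose ends: the Gysin/Hard Lefschetz ``guiding model'' is a consistency check on the threshold, not a proof in the general case; and Ohsawa's theorem as stated requires a K\"ahler ambient manifold, which is exactly why the paper routes the filling through Lempert's projective realization rather than an arbitrary complex manifold $X$.
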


Note that sharper results than Theorem~\ref{thm:vanishing-of-Chern-classes} hold for some particular cases. For instance, Theorem~\ref{thm:vanishing-of-Chern-classes} holds for $\mathbb{Z}$-coefficients if $M$ can be realized as a real hypersurface in a Stein manifold. On the other hand, we can relax the degree condition to $2 \abs{K} \geq n + 1$ if $M$ is the link of an isolated singularity; see~\cite{Kollar13} for example. However, we will see in Section~\ref{section:examples} that Theorem~\ref{thm:vanishing-of-Chern-classes} is ``optimal'' for a general closed strictly pseudoconvex CR manifold. More precisely, we will give an example of a closed strictly pseudoconvex CR manifold $\big(M,T^{1,0} M\big)$ of dimension $2 n + 1$ with $c_{K}\big(T^{1,0}M\big) \neq 0$ in $H^{2 \abs{K}}(M, \mathbb{Z})$ for any multi-index $K$ with $1 \leq \abs{K} \leq n$ (Proposition~\ref{prop:non-vanishing-torsion}). We will also construct a $(2 n + 1)$-dimensional closed strictly pseudoconvex CR manifold $\big(M,T^{1,0} M\big)$ with $c_{K}\big(T^{1,0}M\big) \neq 0$ in $H^{2 \abs{K}}(M, \mathbb{C})$ for any multi-index $K$ with $1 \leq 2\abs{K} \leq n + 1$ (Proposition~\ref{prop:optimal-degree}).

Theorem~\ref{thm:vanishing-of-Chern-classes} in particular implies that $c_{2}\big(T^{1,0}M\big) = 0$ in $H^{4}(M, \mathbb{C})$ for any closed strictly pseudoconvex CR manifold $\big(M,T^{1,0} M\big)$ of dimension five. Hence the assumption on the second Chern class in \cite[Proposition~8.8]{Case-Gover17} automatically holds for the strictly pseudoconvex case, which is compatible with \cite[Theorem~4.6]{Marugame16}. It would be an interesting problem whether Theo\-rem~\ref{thm:vanishing-of-Chern-classes} remains true for non-degenerate CR manifolds or non-compact strictly pseudoconvex CR manifolds.

We also remark a relation between Theorem~\ref{thm:vanishing-of-Chern-classes} and contact topology. Let $M$ be a closed oriented $(2 n + 1)$-dimensional manifold with a positive co-oriented contact structure $\xi$. Then we can define the $k$-th Chern class $c_{k}(\xi) \in H^{2 k}(M, \mathbb{Z})$ of $\xi$ by using an adapted almost complex structure on~$\xi$. A contact structure $\xi$ is said to be \emph{holomorphically fillable} if $M$ can be realized as the boundary of a strictly pseudoconvex domain and $\xi = \Re T^{1, 0} M$ holds. In this case, $c_{k}(\xi)$ coincides with $c_{k}\big(T^{1,0}M\big)$. Therefore Theorem~\ref{thm:vanishing-of-Chern-classes} implies the following

\begin{Corollary}Let $(M, \xi)$ be a holomorphically fillable contact manifold of dimension $2 n + 1 \geq 5$. Then $c_{K}(\xi)$ is a torsion class in $H^{2 \abs{K}}(M, \mathbb{Z})$ for any multi-index $K$ satisfying $2 \abs{K} \geq n + 2$.
\end{Corollary}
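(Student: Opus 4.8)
The plan is to derive the Corollary as a formal consequence of Theorem~\ref{thm:vanishing-of-Chern-classes}, bridging the two statements by (i) identifying the contact Chern classes $c_{k}(\xi)$ with the CR Chern classes $c_{k}\big(T^{1,0}M\big)$, and (ii) recalling that the torsion subgroup of integral cohomology is exactly the kernel of the change-of-coefficients map to complex cohomology.

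First I would unwind the hypothesis of holomorphic fillability. By definition $M$ is realized as the boundary of a strictly pseudoconvex domain with $\xi = \Re T^{1,0}M$, so $\big(M, T^{1,0}M\big)$ is a closed strictly pseudoconvex CR manifold of dimension $2n+1 \geq 5$ and Theorem~\ref{thm:vanishing-of-Chern-classes} applies to it. The complex structure that $T^{1,0}M$ induces on $\xi$ is an almost complex structure adapted to the conformal symplectic form $d\alpha|_{\xi}$ determined by a contact form $\alpha$. Since the space of such adapted structures on a fixed co-oriented contact distribution is contractible, the classes $c_{k}(\xi)$ do not depend on the chosen adapted structure, and in particular they coincide with $c_{k}\big(T^{1,0}M\big)$ in $H^{2k}(M, \mathbb{Z})$ -- the coincidence already recorded in the text preceding the Corollary. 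Multiplying these, one gets $c_{K}(\xi) = c_{K}\big(T^{1,0}M\big)$ in $H^{2\abs{K}}(M, \mathbb{Z})$ for every multi-index $K$.

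Next, for any $K$ with $2\abs{K} \geq n+2$, Theorem~\ref{thm:vanishing-of-Chern-classes} gives $c_{K}\big(T^{1,0}M\big) = 0$ in $H^{2\abs{K}}(M, \mathbb{C})$. Combined with the previous identification, this says that the image of the integral class $c_{K}(\xi)$ under the coefficient homomorphism $H^{2\abs{K}}(M, \mathbb{Z}) \to H^{2\abs{K}}(M, \mathbb{C})$ vanishes. To finish, I would invoke the universal coefficient theorem: because $\mathbb{C}$ is a torsion-free $\mathbb{Z}$-module, $H^{*}(M, \mathbb{C}) \cong H^{*}(M, \mathbb{Z}) \otimes_{\mathbb{Z}} \mathbb{C}$, and tensoring a finitely generated abelian group with a characteristic-zero field annihilates precisely its torsion part. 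Hence the kernel of the coefficient map is exactly the torsion subgroup, so $c_{K}(\xi)$ is a torsion class, as claimed.

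I do not expect a genuine obstacle here: the Corollary is essentially a restatement of Theorem~\ref{thm:vanishing-of-Chern-classes} over integral coefficients. The only two points meriting explicit mention are the contact-topological identification $c_{k}(\xi) = c_{k}\big(T^{1,0}M\big)$, which is standard and already asserted, and the algebraic passage from ``zero over $\mathbb{C}$'' to ``torsion over $\mathbb{Z}$,'' which is where I would be careful to state the universal-coefficients input cleanly.
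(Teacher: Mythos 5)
Your proposal is correct and follows exactly the route the paper takes: the paper treats the Corollary as an immediate consequence of Theorem~\ref{thm:vanishing-of-Chern-classes}, citing only the identification $c_{k}(\xi) = c_{k}\big(T^{1,0}M\big)$ for an adapted almost complex structure, with the passage from vanishing over $\mathbb{C}$ to torsion over $\mathbb{Z}$ left implicit. Your explicit justification of both steps (contractibility of the space of adapted structures, and the universal-coefficients argument that the kernel of $H^{2\abs{K}}(M,\mathbb{Z}) \to H^{2\abs{K}}(M,\mathbb{C})$ is the torsion subgroup) fills in exactly what the paper omits.
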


This result is no longer true for a general contact structure. As far as the author knows, such a constraint on Chern classes has not been obtained yet for a contact structure
satisfying other fillability conditions.

This paper is organized as follows. In Section~\ref{section:preliminaries}, we recall some basic notions related to strictly pseudoconvex domains and CR manifolds.
Section~\ref{section:proof-of-theorem} provides a proof of Theorem~\ref{thm:vanishing-of-Chern-classes}. In Section~\ref{section:examples}, we will see that, by constructing some examples, Theorem~\ref{thm:vanishing-of-Chern-classes} does not hold for $\mathbb{Z}$-coefficients in general, and the degree condition is optimal.

\section{Preliminaries} \label{section:preliminaries}

Let $X$ be a complex manifold of complex dimension $n + 1$. A real-valued smooth function $\varphi$ on $X$ is called an \emph{exhaustion function} if $\varphi^{- 1}(\opcl{- \infty}{c})$ is compact in $X$ for any $c \in \mathbb{R}$. We say $\varphi$ to be \emph{strictly plurisubharmonic} if its complex Hessian $\pqty{\pdv*{\varphi}{z^{i}}{\overline{z}^{j}}}$ is a positive-definite Hermitian matrix for any holomorphic local coordinate $\big(z^{1}, \dots , z^{n + 1}\big)$. A \emph{Stein manifold} is a~complex manifold admitting a~strictly plurisubharmonic exhaustion function.

Let $\Omega$ be a relatively compact domain in $X$ with smooth boundary $M = \partial \Omega$. Then there exists a smooth function $\rho$ on $X$ such that
\begin{gather*}
\Omega = \rho^{-1}((- \infty, 0)), \qquad M = \rho^{- 1}(0), \qquad
{\rm d} \rho \neq 0 \ \text{on} \ M;
\end{gather*}
such a $\rho$ is called a \emph{defining function} of $\Omega$. A domain $\Omega$ is said to be \emph{strictly pseudoconvex} if we can take a defining function $\rho$ of $\Omega$ that is strictly plurisubharmonic near $M$. We call $\Omega$ a \emph{Stein domain} if $\Omega$ admits a defining function $\rho$ that is strictly plurisubharmonic on a~neighborhood of the closure of $\Omega$. Note that a Stein domain is a Stein manifold; this is because $- 1 / \rho$ defines a strictly plurisubharmonic exhaustion function on~$\Omega$.

We next recall some notions related to CR manifolds. Let $M$ be a $(2 n + 1)$-dimensional manifold without boundary. A \emph{CR structure} is an $n$-dimensional complex subbundle $T^{1, 0} M$ of the complexified tangent bundle $T M \otimes \mathbb{C}$ satisfying the following two conditions:
\begin{gather*}
T^{1, 0} M \cap \overline{T^{1, 0} M} = 0, \qquad
\big[\Gamma\big(T^{1,0}M\big), \Gamma\big(T^{1,0}M\big)\big] \subset \Gamma\big(T^{1,0}M\big).
\end{gather*}
A typical example of a CR manifold is a real hypersurface $M$ in a complex manifold $X$; it has the natural CR structure
\begin{gather*}
T^{1, 0} M= T^{1, 0} X |_{M} \cap (T M \otimes \mathbb{C}).
\end{gather*}
A CR structure $T^{1, 0} M$ is said to be \emph{strictly pseudoconvex} if there exists a nowhere-vanishing real one-form $\theta$ on $M$ such that $\theta$ annihilates $T^{1, 0} M$ and
\begin{gather*}
- \sqrt{- 1} {\rm d} \theta \big(Z, \overline{Z}\big) > 0, \qquad 0 \neq Z \in T^{1, 0} M;
\end{gather*}
we call such a one-form a \emph{contact form}. The natural CR structure on the boundary of a strictly pseudoconvex domain is strictly pseudoconvex. Conversely, any closed connected strictly pseudoconvex CR manifold of dimension at least five can be realized as the boundary of a strictly pseudoconvex domain.

\section{Proof of Theorem~\ref{thm:vanishing-of-Chern-classes}} \label{section:proof-of-theorem}

Let $\big(M,T^{1,0} M\big)$ be a closed strictly pseudoconvex CR manifold of dimension $2 n + 1 \geq 5$. Without loss of generality, we may assume that $M$ is connected.
Then it can be realized as the boundary of a strictly pseudoconvex domain $\Omega$ in an $(n + 1)$-dimensional complex projective manifold $X$~\cite[Theorem~8.1]{Lempert95}.
The vector bundle $T^{1, 0} X |_{M}$ is decomposed into the direct sum of $T^{1, 0} M$ and a~trivial complex line bundle, and consequently $c_{K}\big(T^{1,0}M\big) = c_{K}\big(T^{1, 0} X |_{M}\big)$ is in the image of the restriction morphism $H^{2 \abs{K}}(\Omega, \mathbb{C}) \to H^{2 \abs{K}}(M, \mathbb{C})$. On the other hand, the natural morphism $H^{i}_{c}(\Omega, \mathbb{C}) \to H^{i}(\Omega, \mathbb{C})$ is surjective for $i \geq n + 2$ according to \cite[Corollary~4]{Ohsawa82}; see \cite[Lemma]{Bungart92} for another proof. From the exact sequence
\begin{gather*}
H^{i}_{c}(\Omega, \mathbb{C}) \to H^{i}(\Omega, \mathbb{C}) \to H^{i}(M, \mathbb{C}),
\end{gather*}
it follows that $H^{i}(\Omega, \mathbb{C}) \to H^{i}(M, \mathbb{C})$ is identically zero for $i \geq n + 2$. This completes the proof of Theorem~\ref{thm:vanishing-of-Chern-classes}.

\section{Examples} \label{section:examples}

In this section, we treat some examples related to Theorem~\ref{thm:vanishing-of-Chern-classes}. We first show that Theorem~\ref{thm:vanishing-of-Chern-classes} does not hold for $\mathbb{Z}$-coefficients in general.

\begin{Proposition} \label{prop:non-vanishing-torsion} For each positive integer $n$, there exists a closed strictly pseudoconvex CR manifold $\big(M,T^{1,0} M\big)$ of dimension $2 n + 1$ such that, for every multi-index $K$ with $1 \leq \abs{K} \leq n$, the $K$-th Chern class $c_{K}\big(T^{1,0}M\big)$ is not equal to zero in $H^{2 \abs{K}}(M, \mathbb{Z})$.
\end{Proposition}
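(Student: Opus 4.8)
The plan is to take $M$ to be the unit circle bundle of a negatively curved line bundle over $\mathbb{CP}^{n}$ and to read off its Chern classes from the Gysin sequence. Fix an integer $k \geq 2$ (to be constrained below), let $B = \mathbb{CP}^{n}$ and $L = \mathcal{O}(-k) \to B$, and let $\pi\colon X \to B$ be the total space of $L$, a complex manifold of dimension $n + 1$. Choose a Hermitian metric $h$ on $L$ with negative curvature and set $\Omega = \{v \in X : \abs{v}_{h} < 1\}$, a relatively compact domain with smooth boundary $M = S(L) = \{\abs{v}_{h} = 1\}$, a closed manifold of dimension $2 n + 1$ (a lens space $S^{2 n + 1}/\mathbb{Z}_{k}$).

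First I would check that $\Omega$ is strictly pseudoconvex, so that $M$ inherits a strictly pseudoconvex CR structure as in Section~\ref{section:preliminaries}. Writing $\abs{v}_{h}^{2} = \abs{w}^{2} e^{- \varphi}$ in a local holomorphic frame, with $\varphi$ a local weight whose complex Hessian is negative definite (because $L$ is negative), a direct computation of the complex Hessian of $\rho = \abs{v}_{h}^{2} - 1$ shows that it is strictly plurisubharmonic away from the zero section, hence near $M$; this is the classical Grauert picture in which the zero section is exceptional. In particular $M$ falls under the hypotheses of Theorem~\ref{thm:vanishing-of-Chern-classes}, so whatever classes we produce will necessarily be torsion, and the content of the proposition is that they do not vanish integrally.

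Next I would compute $c_{K}\big(T^{1,0}M\big)$. As in the proof of Theorem~\ref{thm:vanishing-of-Chern-classes}, $T^{1,0}X|_{M} \cong T^{1,0}M \oplus \mathbb{C}$, so $c_{K}\big(T^{1,0}M\big) = c_{K}\big(T^{1,0}X|_{M}\big)$. The total space of a line bundle carries a smooth splitting $T^{1,0}X \cong \pi^{*} T^{1,0}B \oplus \pi^{*} L$, so by the Whitney formula $c\big(T^{1,0}X\big) = \pi^{*} c\big(T^{1,0}B\big) \cdot \pi^{*}\big(1 + c_{1}(L)\big)$. Restricting to $M$ and denoting $\pi|_{M}\colon M \to B$ again by $\pi$: the Euler class of the circle bundle $S(L)$ equals $c_{1}(L)$, so the Gysin sequence gives $\pi^{*} c_{1}(L) = 0$ in $H^{2}(M, \mathbb{Z})$, the factor $1 + c_{1}(L)$ drops out, and $c_{K}\big(T^{1,0}M\big) = \pi^{*} c_{K}\big(T^{1,0}B\big)$. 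For $B = \mathbb{CP}^{n}$ one has $c\big(T^{1,0}B\big) = (1 + h)^{n + 1}$ with $h$ the hyperplane class, so $c_{K}\big(T^{1,0}B\big) = \big(\prod_{i} \binom{n + 1}{k_{i}}\big) h^{\abs{K}}$ for $K = (k_{1}, \dots, k_{m})$.

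Finally I would show non-vanishing. Since $c_{1}(L) = - k h$ and $H^{*}(\mathbb{CP}^{n}, \mathbb{Z}) = \mathbb{Z}[h]/\big(h^{n + 1}\big)$, the Gysin sequence identifies $H^{2 j}(M, \mathbb{Z}) \cong \mathbb{Z}/k$ for $1 \leq j \leq n$, with $\pi^{*} h^{j}$ a generator. Hence $c_{K}\big(T^{1,0}M\big) = \big(\prod_{i} \binom{n + 1}{k_{i}}\big) \pi^{*} h^{\abs{K}}$ is nonzero in $H^{2 \abs{K}}(M, \mathbb{Z}) \cong \mathbb{Z}/k$ exactly when $\prod_{i} \binom{n + 1}{k_{i}}$ is coprime to $k$. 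This arithmetic constraint is the only delicate point, and it is resolved by choosing $k$ to be a prime larger than $\max_{1 \leq j \leq n} \binom{n + 1}{j}$: then for every $K$ with $1 \leq \abs{K} \leq n$ each index satisfies $1 \leq k_{i} \leq n$ and $0 < \binom{n + 1}{k_{i}} < k$, so $k$ divides none of the factors and the product is coprime to $k$. With this choice $c_{K}\big(T^{1,0}M\big) \neq 0$ for all admissible $K$, which proves the proposition.
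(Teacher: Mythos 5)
Your proposal is correct and takes essentially the same approach as the paper: the same unit circle bundle of $\mathcal{O}(-d)$ over $\mathbb{C}P^{n}$, the same Gysin-sequence identification $H^{2j}(M,\mathbb{Z})\cong\mathbb{Z}/d\mathbb{Z}$ with generator $\pi^{*}h^{j}$, and the same trick of taking the degree to be a suitable prime. The only (cosmetic) differences are that the paper gets $c_{K}\big(T^{1,0}M\big)=\pi^{*}c_{K}\big(T^{1,0}\mathbb{C}P^{n}\big)$ directly from the bundle isomorphism $T^{1,0}M\cong\pi^{*}T^{1,0}\mathbb{C}P^{n}$ rather than via the Whitney formula on the total space, and it only needs a prime $d>n+1$ (every prime factor of $\binom{n+1}{j}$ is at most $n+1$) rather than a prime exceeding $\max_{j}\binom{n+1}{j}$.
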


\begin{proof}Let $\mathbb{C}P^{n}$ be the $n$-dimensional complex projective space and $\mathcal{O}(- d)$ be the holomorphic line bundle over $\mathbb{C}P^{n}$ of degree $- d < 0$. Fix a Hermitian metric $h$ on $\mathcal{O}(- d)$ with negative curvature, and consider
\begin{gather*}
M = \big\{v \in \mathcal{O}(- d) \,|\, h(v, v) = 1\big\},
\end{gather*}
which is a principal $S^{1}$-bundle over $\mathbb{C}P^{n}$. This manifold has a strictly pseudoconvex CR structure $T^{1, 0} M$ induced from the total space of $\mathcal{O}(- d)$, and $T^{1, 0} M$ is isomorphic to $\pi^{*} T^{1, 0} \mathbb{C}P^{n}$ as a~complex vector bundle, where $\pi \colon M \to \mathbb{C}P^{n}$ is the canonical projection. Consider the Gysin exact sequence
\begin{gather*}
H^{i - 2}\big(\mathbb{C}P^{n}, \mathbb{Z}\big) \xrightarrow{- d \cdot \alpha}H^{i}\big(\mathbb{C}P^{n}, \mathbb{Z}\big) \xrightarrow{\pi^{*}} H^{i}(M, \mathbb{Z}) \rightarrow H^{i - 1}\big(\mathbb{C}P^{n}, \mathbb{Z}\big),
\end{gather*}
where $\alpha = c_{1}(\mathcal{O}(1))$ is a generator of $H^{2}\big(\mathbb{C}P^{n}, \mathbb{Z}\big) \cong \mathbb{Z}$. This gives that
\begin{gather*}
H^{2 k}(M, \mathbb{Z})\cong\begin{cases}
\mathbb{Z}, & k = 0, \\
\mathbb{Z} / d \mathbb{Z}, & 1 \leq k \leq n, \\
0, & \text{otherwise},
\end{cases}
\end{gather*}
and $\pi^{*} \alpha^{k}$ is a generator of $H^{2 k}(M, \mathbb{Z})$. On the other hand,
\begin{gather*}
c_{K}\big(T^{1,0}M\big)= \pi^{*} c_{K}\big(T^{1, 0} \mathbb{C}P^{n}\big) = \bqty{\prod_{l = 1}^{m} \binom{n + 1}{k_{l}}} \pi^{*} \alpha^{\abs{K}}.
\end{gather*}
Hence, for a multi-index $K$ with $1 \leq \abs{K} \leq n$, the $K$-th Chern class $c_{K}\big(T^{1,0}M\big)$ is equal to zero in $H^{2 \abs{K}}(M, \mathbb{Z})$ if and only if $\prod\limits_{l = 1}^{m} \binom{n + 1}{k_{l}} \in d \mathbb{Z}$. In particular if we choose $d$ as a prime integer greater than $n + 1$, then $c_{K}\big(T^{1,0}M\big)$ does not vanish for any $K$ with $1 \leq \abs{K} \leq n$.
\end{proof}

We next see that the degree condition in Theorem~\ref{thm:vanishing-of-Chern-classes} is optimal.

\begin{Proposition} \label{prop:optimal-degree}For each positive integer $n$, there exists an $(n + 1)$-dimensional Stein domain~$\Omega$ such that its boundary $M = \partial \Omega$ satisfies $c_{K}\big(T^{1,0}M\big) \neq 0$ in $H^{2 \abs{K}}(M, \mathbb{C})$ for any multi-index~$K$ with $1 \leq 2 \abs{K} \leq n + 1$.
\end{Proposition}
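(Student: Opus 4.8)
The plan is to realize $M$ as the boundary of a Stein domain homotopy equivalent to $\mathbb{C}P^{k}$ with $k=\lfloor(n+1)/2\rfloor$, arranged so that the $k$ directions carrying cohomology are complex-projective while the remaining $n+1-k$ directions are topologically trivial. Writing $M=\partial\Omega$ with $\Omega$ Stein of complex dimension $n+1$, we have $T^{1,0}X|_{M}\cong T^{1,0}M\oplus\mathbb{C}$, so each $c_{K}(T^{1,0}M)$ is the restriction of $c_{K}(T^{1,0}\Omega)$ along $H^{2\abs{K}}(\Omega,\mathbb{C})\to H^{2\abs{K}}(M,\mathbb{C})$. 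Since a Stein manifold of complex dimension $n+1$ is homotopy equivalent to a CW complex of dimension at most $n+1$, Lefschetz duality gives $H^{i}(\overline{\Omega},M)\cong H_{2n+2-i}(\overline{\Omega})=0$ for $i\le n$, so this restriction is injective in every degree $2\abs{K}\le n$; the only delicate degree is the top one, $2\abs{K}=n+1$, which occurs for odd $n$. The natural algebraic candidate, $\Omega=X\setminus C$ for a smooth ample divisor $C$ in a product of projective spaces, does produce the correct classes below the top degree, but it must fail at the top: there $M$ is the unit circle bundle of the positive normal bundle $N_{C}$, and hard Lefschetz makes $\cup\,c_{1}(N_{C})\colon H^{n-1}(C)\to H^{n+1}(C)$ an isomorphism, so $\pi^{*}$ annihilates all of $H^{n+1}(C)$. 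This is what forces the normal directions to be trivial.

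Accordingly I would start from the product almost complex structure on $\mathbb{C}P^{k}\times\mathbb{C}^{\,n+1-k}$, whose total Chern class is the pullback of $(1+\alpha)^{k+1}$ with $\alpha=c_{1}(\mathcal{O}(1))$, together with the exhausting Morse function $\phi=g+\abs{w}^{2}$, where $g$ is a perfect Morse function on $\mathbb{C}P^{k}$ with critical indices $0,2,\dots,2k\le n+1$ and $w$ denotes the coordinate on $\mathbb{C}^{\,n+1-k}$. Eliashberg's $h$-principle then homotopes this almost complex structure to an integrable Stein structure for which $\phi$ is strictly plurisubharmonic; taking $\Omega=\{\phi<c\}$ for a large regular value $c$ yields a Stein domain diffeomorphic to $\mathbb{C}P^{k}\times D^{2(n+1-k)}$ with boundary $M\cong\mathbb{C}P^{k}\times S^{2(n+1-k)-1}$. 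Because Chern classes are invariant under homotopy of the almost complex structure, $c_{K}(T^{1,0}M)=\bqty{\prod_{l}\binom{k+1}{k_{l}}}\pi^{*}\alpha^{\abs{K}}$, where $\pi\colon M\to\mathbb{C}P^{k}$ is the projection.

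By the Künneth formula $\pi^{*}\alpha^{j}\ne0$ in $H^{2j}(M,\mathbb{C})$ for all $j\le k$, and since $\abs{K}\le k$ is equivalent to $2\abs{K}\le n+1$, every $c_{K}(T^{1,0}M)$ with $1\le 2\abs{K}\le n+1$ is nonzero, which is the claim; connectedness of $M$ is clear. The hard part is the existence of the Stein structure. Eliashberg's theorem applies only in real dimension at least six, i.e.\ $n\ge2$, and the case $n=1$ is genuinely exceptional: the Seiberg--Witten adjunction inequality forbids a sphere of nonnegative self-intersection inside a Stein surface, so $\mathbb{C}P^{1}$ cannot be used. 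For $n=1$ I would instead invoke Gompf's Stein structures on $\Sigma\times D^{2}$ for a closed surface $\Sigma$ of genus $g\ge2$, which realize $\langle c_{1},[\Sigma]\rangle=2-2g\ne0$; then $M=\Sigma\times S^{1}$ and $c_{1}(T^{1,0}M)=\pi^{*}c_{1}(T^{1,0}\Sigma)\ne0$ in $H^{2}(M,\mathbb{C})$.
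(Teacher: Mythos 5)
Your construction is correct in substance, but it takes a genuinely different route from the paper. The paper bootstraps from a single four\-/dimensional Stein domain $\Omega_{0}$ with $c_{1}\big(T^{1,0}\partial\Omega_{0}\big)\neq 0$ in $H^{2}(\partial\Omega_{0},\mathbb{C})$ (quoting Etnyre--Ozbagci), forms the Stein domain $\big\{\sum_{i}\psi_{0}(p_{i})<C\big\}$ inside $(\Omega_{0})^{k}$ (times $\mathbb{C}$ when $n$ is even), and detects the Chern monomials by restricting to an embedded copy of $(\partial\Omega_{0})^{k}$ in the boundary, where the K\"unneth formula and the Whitney sum formula do all the work; no $h$-principle is needed, and all dimensions $n\geq 1$ are handled uniformly. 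You instead manufacture the topological model $\mathbb{C}P^{k}\times D^{2(n+1-k)}$ directly via Eliashberg's existence theorem and read off $c_{K}$ from the homotopy class of the product almost complex structure; this buys a cleaner and more explicit boundary $\mathbb{C}P^{k}\times S^{2(n+1-k)-1}$ on which the classes are visibly non-torsion, at the cost of invoking the $h$-principle and of an unavoidable special argument in dimension four, where your appeal to Gompf plays exactly the role of the paper's citation of Etnyre--Ozbagci. Your preliminary observations (injectivity of $H^{2\abs{K}}(\Omega,\mathbb{C})\to H^{2\abs{K}}(M,\mathbb{C})$ below the middle degree, and the hard Lefschetz obstruction for complements of ample divisors in top degree) are correct and explain well why the top degree is the only delicate one.

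Two points deserve more care. First, your K\"unneth computation in the top degree $2\abs{K}=n+1$ needs the diffeomorphism type of $M$, not just the homotopy type of $\Omega$; so you must use the form of Eliashberg's theorem (as in Cieliebak--Eliashberg) in which $\phi$ is \emph{target equivalent} to a $J'$-convex function, so that $M=\{\phi=c\}$ is unchanged --- a version preserving only critical points and indices would leave a gap exactly where you need the statement most. Second, for $n=1$ the identity $c_{1}\big(T^{1,0}M\big)=\pi^{*}c_{1}\big(T^{1,0}\Sigma\big)$ as a cohomology class is more than Gompf's construction gives you (the Stein structure on $\Sigma\times D^{2}$ is not a product structure); what his Legendrian handle calculus provides is $\big\langle c_{1}\big(T^{1,0}\Omega\big),[\Sigma]\big\rangle=\pm(2g-2)$, which, paired with $[\Sigma\times\{\mathrm{pt}\}]\in H_{2}(M,\mathbb{Z})$, already suffices for $c_{1}\big(T^{1,0}M\big)\neq 0$ in $H^{2}(M,\mathbb{C})$. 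Neither point invalidates the argument, but both should be stated precisely.
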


\begin{proof}Let $\Omega_{0}$ be a Stein domain in a two-dimensional complex manifold $X_{0}$ such that its boundary $M_{0} = \partial \Omega_{0}$ satisfies $c_{1}\big(T^{1, 0} M_{0}\big) \neq 0$ in $H^{2}(M_{0}, \mathbb{C})$; see~\cite[Theorem~6.2]{Etnyre-Ozbagci08} for an example of such $\Omega_{0}$. Take a defining function $\rho$ of $\Omega_{0}$ that is strictly plurisubharmonic near the closure of~$\Omega_{0}$. Without loss of generality, we may assume that $\rho$ is an exhaustion function on $X_{0}$. Then, for sufficiently small $\epsilon$, there exists a diffeomorphism $\chi \colon (- \epsilon, \epsilon) \times M_{0} \to \rho^{- 1}((- \epsilon, \epsilon))$ satisfying $\chi(0, p) = p$ and $\rho \circ \chi (t, p) = t$. The function $\psi_{0} = - 1 / \rho$ gives a strictly plurisubharmonic exhaustion function on~$\Omega_{0}$.

We first show the statement for the case of $n = 2 k - 1$. Consider the domain
\begin{gather*}
\Omega= \big\{(p_{1}, \dots , p_{k}) \in (\Omega_{0})^{k} \,|\, \psi_{0}(p_{1}) + \dots + \psi_{0}(p_{k}) < 2 k / \epsilon \big\}.
\end{gather*}
The function $\psi(p_{1}, \dots , p_{k}) = \psi_{0}(p_{1}) + \dots + \psi_{0}(p_{k})$ is a strictly plurisubharmonic exhaustion function on $(\Omega_{0})^{k}$, and $d \psi \neq 0$ on $M = \partial \Omega$. Hence $\Omega$ is a Stein domain in $(\Omega_{0})^{k} \subset (X_{0})^{k}$. As noted in the proof of Theorem~\ref{thm:vanishing-of-Chern-classes}, the cohomology class $c_{K}\big(T^{1,0}M\big)$ coincides with $c_{K}\big(T^{1, 0} (X_{0})^{k} |_{M}\big)$. Consider the map $\iota \colon (M_{0})^{k} \to M \big({\subset} (X_{0})^{k}\big)$ defined by
\begin{gather*}
\iota(p_{1}, \dots , p_{k})= (\chi(- \epsilon / 2, p_{1}), \dots , \chi(- \epsilon / 2, p_{k})).
\end{gather*}
Since this map is homotopic to the natural embedding $(M_{0})^{k} \hookrightarrow (X_{0})^{k}$,
\begin{gather*}
\iota^{*} c_{K}\big(T^{1,0}M\big)= c_{K}\big(\iota^{*} T^{1, 0} (X_{0})^{k}\big)= c_{K}\big(T^{1, 0} (X_{0})^{k} |_{(M_{0})^{k}}\big)= c_{K}\big(\big(T^{1, 0} M_{0}\big)^{k}\big).
\end{gather*}
From the assumption on $\Omega_{0}$, it follows that $c_{K}\big(\big(T^{1, 0} M_{0}\big)^{k}\big) \neq 0$ in $H^{2 \abs{K}}\big((M_{0})^{k}, \mathbb{C}\big)$ for any multi-index $K$ with $1 \leq \abs{K} \leq k$, and consequently $c_{K}\big(T^{1,0}M\big) \neq 0$ in $H^{2 \abs{K}}(M, \mathbb{C})$.

We next treat the case of $n = 2 k$. Consider the domain
\begin{gather*}
\Omega= \big\{(p_{1}, \dots , p_{k}, z) \in (\Omega_{0})^{k} \times \mathbb{C}\,\big|\, \psi_{0}(p_{1}) + \dots + \psi_{0}(p_{k}) + |z|^{2} < 2 k / \epsilon\big\}.
\end{gather*}
This $\Omega$ is a Stein domain in $(\Omega_{0})^{k} \times \mathbb{C} \subset (X_{0})^{k} \times \mathbb{C}$.
Consider the map $\iota \colon (M_{0})^{k} \to M = \partial \Omega$ given by
\begin{gather*}
\iota(p_{1}, \dots , p_{k})= (\chi(- \epsilon / 2, p_{1}), \dots , \chi(- \epsilon / 2, p_{k}), 0).
\end{gather*}
Then we obtain
\begin{gather*}
\iota^{*} c_{K}\big(T^{1,0}M\big)= c_{K}\big(\big(T^{1, 0} M_{0}\big)^{k}\big) \neq 0
\end{gather*}
in $H^{2 \abs{K}}\big((M_{0})^{k}, \mathbb{C}\big)$ for any $K$ satisfying $1 \leq \abs{K} \leq k$ in a similar way to the case of odd~$n$. This proves the statement.
\end{proof}

\begin{Remark}Cao and Chang have stated that, for any Stein domain $\Omega$, the first Chern class $c_{1}\big(T^{1,0}\Omega\big)$ vanishes in $H^{2}(\Omega, \mathbb{C})$; see Proposition~3.2(1) and the proof of Proposition~2.4 in~\cite{Cao-Chang07}. This statement contradicts the above example since $c_{1}\big(T^{1,0}M\big)$ is the image of~$c_{1}\big(T^{1,0}\Omega\big)$ by the restriction morphism. Unfortunately, their proof contains an error. They have claimed that the vanishing of $H^{1, 1}(\Omega)$ would imply that of the first Chern class. However, this is not true since $H^{2}(\Omega, \mathbb{C})$ has no Hodge decomposition like on closed K\"{a}hler manifolds. In particular, we need to add some assumptions on the first Chern class to Main Theorems~(1) and~(2) in~\cite{Cao-Chang07} at least.
\end{Remark}

\subsection*{Acknowledgements}
This paper is a part of the author's Ph.D.~Thesis. He is grateful to his supervisor Kengo Hirachi for various valuable suggestions. He also would like to thank Masanori Adachi, Tomohiro Asano, and Takeo Ohsawa for helpful comments. This work was supported by JSPS Research Fellowship for Young Scientists, JSPS KAKENHI Grant Numbers JP16J04653 and JP19J00063, and the Program for Leading Graduate Schools, MEXT, Japan.

\pdfbookmark[1]{References}{ref}
\LastPageEnding

\end{document}